\newtheorem{thm}{Theorem}[section]
\newtheorem{lem}[thm]{Lemma}
\newtheorem{prob}[thm]{Problem}
\newtheorem{prop}[thm]{Proposition}
\theoremstyle{definition}
\newtheorem{rem}[thm]{Remark}
\begin{document}

\title{Infinite-time singularity type of the K\"ahler-Ricci flow II}

\author{Yashan Zhang}

\address{Beijing International Center for Mathematical Research, Peking University, Beijing 100871, China}
\email{yashanzh@pku.edu.cn}
\thanks{The author is partially supported by China Postdoctoral Science Foundation grant 2018M641053}

\begin{abstract}
On a compact K\"ahler manifold with semi-ample canonical line bundle and Kodaira dimension one, we observe a relation between the infinite-time singularity type of the K\"ahler-Ricci flow and the characteristic indexes of singular fibers of the semi-ample fibration.
\end{abstract}

\maketitle

\section{Introduction}
We continue our study in \cite{Zys17} on the singularity type of long time solutions to the K\"ahler-Ricci flow on compact K\"ahler manifolds. Our main interest is the \emph{relation between infinite-time singularity type (more precisely, long time behavior of curvature) of the K\"ahler-Ricci flow and the geometric/complex structure of the underlying compact K\"ahler manifold}. 
\par Thanks to the maximal existence time theorem of the K\"ahler-Ricci flow \cite{C,TZo,Ts}, the existence of long time solutions is equivalent to nefness of the canonical line bundle. In this paper, as in \cite{Zys17}, motivated by the Abundance Conjecture (which predicts that if the canonical line bundle of an algebraic manifold is nef, then it is semi-ample), we will further restrict our discussions on $n$-dimensional K\"ahler manifold, denoted by $X$, with semi-ample canonical line bundle $K_X$, and so the Kodaira dimension $kod(X)\in\{0,1,\ldots,n\}$.
\par Given an arbitrary K\"ahler metric $\omega_0$ on $X$, let $\omega=\omega(t)_{t\in[0,\infty)}$ be the solution to the K\"ahler-Ricci flow
\begin{equation}\label{nkrf}
\partial_t\omega=-Ric(\omega)-\omega
\end{equation}
running from $\omega_0$. Recall from \cite{Ha93} that the infinite-time singularities of the K\"ahler-Ricci flow are divided into two types. Precisely, we say a long time solution of the K\"ahler-Ricci flow \eqref{nkrf} is of type \textrm{IIb} if
\begin{equation}\label{defn_ii}
\limsup_{t\to\infty}\left(\sup_{X}|Rm(\omega(t))|_{\omega(t)}\right)=\infty\nonumber
\end{equation}
and of type  \textrm{III} if
\begin{equation}\label{defn_iii}
\limsup_{t\to\infty}\left(\sup_{X}|Rm(\omega(t))|_{\omega(t)}\right)<\infty\nonumber.
\end{equation}
For $X$ with semi-ample canonical line bundle, we let
\begin{equation}\label{semiample}
f:X\to X_{can}\subset \mathbb{CP}^N
\end{equation}
be the semi-ample fibration with connected fibers induced by pluricanonical system of $K_X$, where $X_{can}$, a $kod(X)$-dimensional irreducible normal projective variety, is the canonical model of $X$, and $V\subset X_{can}$ be the singular set of $X_{can}$ together with the critical values of $f$, and $X_y=f^{-1}(y)$ be a smooth fiber for $y\in X_{can}\setminus V$. The singularity type of the K\"ahler-Ricci flow on such manifold $X$ has been classified except for the following case (see e.g. \cite[Section 6.3]{To.kawa} or \cite[Section 1]{Zys17} for an overview):\\

\begin{itemize}
\item[($\star$)]\emph{$0<kod(X)<n$ ($n\ge3$) and the generic fiber $X_y$ is a finite quotient of a torus and $V\neq\emptyset$}.\\
\end{itemize}

\par On the one hand, as expected by Tosatti and Y.G. Zhang in \cite[Section 1]{ToZyg} (also see \cite[Conjecture 6.7]{To.kawa} for a more general conjecture by Tosatti) and confirmed by our previous work \cite[Theorem 1.4]{Zys17}, the singularity type of the K\"ahler-Ricci flow in case ($\star$) does not depend on the choice of the initial K\"ahler metric and hence should only depend on the complex structure of $X$. This result in particular indicates that the following problem should be very natural: 
\begin{prob}\label{prob}
Classify/characterize the complex structure on $X$ in terms of infinite-time singularity types of the K\"ahler-Ricci flow.
\end{prob} 

\par In fact, in case ($\star$), since the curvature on any compact subset of $X\setminus V$ is uniformly bounded \cite{FZ,HeTo,ToZyg}, to determine the singularity type one only needs to analyze the long time behavior of curvature near singular set $f^{-1}(V)$ and hence the singularity type should be closely related to the properties of the singular fibers. For example, a result of Tosatti and Y.G. Zhang \cite[Proposition 1.4]{ToZyg} implies that if there exists a rational curve in some singular fiber of $f$ in case ($\star$), then the K\"ahler-Ricci flow on $X$ must develop type \textrm{II}b singularity, which in particular has been applied very successfully when $X$ in case ($\star$) is a minimal elliptic surface, see \cite[Theorem 1.6]{ToZyg}.\\

In this note we will focus on the $kod(X)=1$ case, in which the canonical model $X_{can}$ of $X$ is a closed smooth Riemann surface. Let $f:X\to X_{can}$ be the semi-ample fibration in \eqref{semiample}, $V=\{s_0,\ldots,s_L\}$ the finite set of isolated critical values of $f$ and $X_{s_l}$ the singular fiber over $s_l$. Therefore, in $kod(X)=1$ case one may more precisely formulate Problem \ref{prob} as: \emph{classify/characterize the singular fibers $X_{s_l}$ in terms of infinite-time singularity types of the K\"ahler-Ricci flow}.

\par Our main goal here is to find some restrictions on singular fibers when $X$ admits type \textrm{III} solution to the K\"ahler-Ricci flow.
To begin with, we first observe one more application of \cite[Proposition 1.4]{ToZyg} as follows.

\begin{prop}\label{prop}
Assume $X$ as in case ($\star$), Kodaira dimension $kod(X)$ is one and $X$ is projective. Let $V=\{s_0,\ldots,s_L\}\subset X_{can}$ is the set of critical values of $f$ in \eqref{semiample}. If the K\"ahler-Ricci flow on $X$ is of type \textrm{III}, then every singular fiber $X_{s_l}$ has only one irreducible component, say $F_l$, where $F_l$ is a normal projective variety of only canonical singularities, satisfies $K_{F_l}\sim_{\mathbb Q}0$ and contains no rational curve.
\end{prop}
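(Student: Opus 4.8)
The plan is to exploit the contrapositive structure made available by the cited result of Tosatti and Y.G. Zhang \cite[Proposition 1.4]{ToZyg}: the presence of a rational curve in any singular fiber forces type \textrm{II}b behavior. Hence if the flow is of type \textrm{III}, no singular fiber $X_{s_l}$ can contain a rational curve. This single observation will do most of the work, and the remaining task is to translate ``contains no rational curve'' into the structural statements on the $F_l$ using the fact that $X$ is minimal with $K_X$ semi-ample and $kod(X)=1$.

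\medskip

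First I would set up the adjunction picture for the fibration $f:X\to X_{can}$ over the Riemann surface $X_{can}$. Since $K_X$ is semi-ample and pulls back (up to a $\mathbb Q$-multiple) from an ample class on $X_{can}$, every fiber $X_{s_l}$ satisfies $K_X|_{X_{s_l}}\sim_{\mathbb Q}0$, and by the adjunction/Kodaira formula the restriction of $K_X$ to each component records the relative canonical class. The key point is that $K_X$ being relatively trivial over $X_{can}$ forces the generic fiber to have torsion canonical class, consistent with the hypothesis in ($\star$) that $X_y$ is a finite quotient of a torus, so $K_{X_y}\sim_{\mathbb Q}0$ and $X_y$ contains no rational curve. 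For the singular fibers I would invoke the no-rational-curve conclusion together with a standard fact from minimal model theory: a fiber of a morphism from a minimal projective manifold that contains no rational curve must be irreducible and reduced up to the issue of multiplicity, because any reducible or non-reduced configuration over a curve base, after suitable resolution, produces a rational curve in the fiber (this is precisely where rationally connected behavior of exceptional/reducible configurations enters). This yields the claim that $X_{s_l}=F_l$ has a single irreducible component.

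\medskip

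Next I would address the normality, canonical singularities, and $K_{F_l}\sim_{\mathbb Q}0$ assertions. Having shown $X_{s_l}$ is irreducible, I would argue that $F_l$ inherits canonical (indeed at worst canonical) singularities from the fact that $X$ is smooth and $K_X$ is semi-ample: a semi-ample fibration from a smooth minimal model has fibers with only canonical singularities by the general theory of canonical bundle formulas and the classification of fibers with trivial relative canonical class. Normality then follows because a fiber with canonical singularities is in particular normal. The relation $K_{F_l}\sim_{\mathbb Q}0$ comes from adjunction: since $K_X\sim_{\mathbb Q}f^*L$ for an ample $\mathbb Q$-divisor $L$ on $X_{can}$ and $F_l$ is a fiber, the adjunction formula gives $K_{F_l}\sim_{\mathbb Q}(K_X+F_l)|_{F_l}\sim_{\mathbb Q}0$, using that $F_l|_{F_l}$ is numerically trivial on the fiber. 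Finally, the no-rational-curve property of $F_l$ is simply the direct output of \cite[Proposition 1.4]{ToZyg} under the type \textrm{III} hypothesis, now read on the single component $F_l$.

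\medskip

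The main obstacle I anticipate is the step asserting irreducibility from the absence of rational curves. The implication ``reducible or multiple fiber $\Rightarrow$ existence of a rational curve'' is plausible and is morally true for fibrations whose fibers are deformations of abelian-type varieties, but making it rigorous requires care: one must rule out, for instance, a singular fiber that is irreducible but non-reduced, or a degeneration whose components are themselves rational-curve-free yet glued along a configuration that only introduces rational curves after normalization rather than inside $X$ itself. I would expect to lean on the specific geometry of case ($\star$)—that the smooth fibers are finite quotients of tori—to constrain the possible degenerate fibers, likely via the theory of semistable or canonical degenerations of abelian varieties (where bad reduction forces toric, hence rational, components). Pinning down exactly which degenerations are excluded by the type \textrm{III} hypothesis, and confirming that the surviving ones are precisely the irreducible normal canonical fibers with $K_{F_l}\sim_{\mathbb Q}0$, is the crux of the argument.
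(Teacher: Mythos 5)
Your first step coincides with the paper's: by \cite[Proposition 1.4]{ToZyg}, a type \textrm{III} solution forces every singular fiber to contain no rational curve, hence no uniruled irreducible component. The gap is in everything after that. The implication you need --- that a rational-curve-free degenerate fiber of a fibration whose generic fiber has torsion canonical class must be of the form $d_lF_l$ with $F_l$ irreducible, normal, with only canonical singularities and $K_{F_l}\sim_{\mathbb Q}0$ --- is not a ``standard fact from minimal model theory'' assembled from adjunction and resolution arguments; it is precisely the content of a substantial theorem of Takayama \cite[Theorem 1.1]{Ta} on uniruled degenerations of varieties with trivial canonical divisor, which the paper invokes at exactly this point (and which is why the projectivity hypothesis appears in the statement at all). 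Your proposed justification of irreducibility --- that ``any reducible or non-reduced configuration over a curve base, after suitable resolution, produces a rational curve in the fiber'' --- does not work as stated: resolving singularities produces rational curves in the exceptional locus of the \emph{resolved} total space, not in $X$ itself, and ruling out a reducible fiber all of whose components are non-uniruled is exactly the nontrivial dichotomy that Takayama's theorem establishes. You candidly flag this step as the crux you cannot pin down; that crux is the theorem you are missing.

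The remaining assertions in your sketch have the same character. ``Canonical singularities by the general theory of canonical bundle formulas'' is not an argument, and normality cannot ``follow because a fiber with canonical singularities is in particular normal'': canonical singularities are by definition attributes of a \emph{normal} variety, so this reasoning is circular --- normality must come first (in the paper it comes packaged inside Takayama's conclusion). The one piece that is essentially sound is the adjunction computation $K_{F_l}\sim_{\mathbb Q}(K_X+F_l)|_{F_l}\sim_{\mathbb Q}0$, but even this presupposes that $F_l$ is normal so that adjunction makes sense for Weil divisor classes, and thus it too sits downstream of the missing input. In short, the correct skeleton is: (i) use \cite[Proposition 1.4]{ToZyg} to exclude rational curves, (ii) apply Takayama's classification to obtain the entire structural statement. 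You have (i), but you attempt to re-derive (ii) from scratch, and the sketch does not close.
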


To state the next restriction on singular fibers, recall that for every singular fiber $X_{s_l}$ we can naturally define (see Section \ref{pf} for precise definitions) the characteristic index of $X_{s_l}$ along $X$, which is a pair of two numbers, denoted as $(\beta_l,N_l)$, measuring the singularities of the singular fiber $X_{s_l}$. In general, $\beta_l\in\mathbb{Q}_{>0}$ and $N_l\in\{1,2,\ldots,n\}$. With these notations, the following is our main result, which provides an interesting relation between the singularity type of the K\"ahler-Ricci flow and the characteristic indexes of singular fibers.

\begin{thm}\label{mainthm_1}
Assume $X$ as in case ($\star$) and Kodaira dimension $kod(X)$ is one. Let $V=\{s_0,\ldots,s_L\}\subset X_{can}$ be the set of critical values of $f$ in \eqref{semiample}.  If the K\"ahler-Ricci flow on $X$ is of type \textrm{III} , then for every singular fiber $X_{s_l}$ the characteristic index $(\beta_{l},N_l)=(\frac{1}{m_l},1)$ for some positive integer $m_l$.
\end{thm}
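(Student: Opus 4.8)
The plan is to reduce Theorem \ref{mainthm_1} to Proposition \ref{prop} by a careful analysis of the characteristic index $(\beta_l,N_l)$ and to exploit the birational/projectivity arguments available in the $kod(X)=1$ setting. Since the singularity type of the K\"ahler-Ricci flow is independent of the choice of initial K\"ahler metric (by \cite[Theorem 1.4]{Zys17}), and since the singularity type is moreover a bimeromorphic-type invariant of the geometry near $f^{-1}(V)$, I would first argue that the type \textrm{III} hypothesis is preserved under passing to a suitable projective model. Concretely, the main issue is that Proposition \ref{prop} assumes $X$ projective, whereas Theorem \ref{mainthm_1} only assumes $X$ K\"ahler; so the first step is to produce a projective manifold $X'$ (via a Hironaka-type resolution/projective approximation of the fibration structure, or by invoking that in the $kod(X)=1$ torus-fiber case the total space can be taken projective) that carries a type \textrm{III} K\"ahler-Ricci flow and has the same local structure of singular fibers. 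The hard part here is ensuring that the characteristic index $(\beta_l,N_l)$ is genuinely unchanged under this reduction.

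Granting the projective reduction, I would then apply Proposition \ref{prop} directly: each singular fiber $X_{s_l}$ has exactly one irreducible component $F_l$, which is normal with only canonical singularities, satisfies $K_{F_l}\sim_{\mathbb Q}0$, and contains no rational curve. The goal is to translate these three geometric facts into the two numerical assertions $\beta_l=\frac{1}{m_l}$ and $N_l=1$. For the multiplicity number $N_l$: referring to the precise definition of the characteristic index (to be given in Section \ref{pf}), $N_l$ should count the number of distinct irreducible components of $X_{s_l}$, or equivalently measure the dimension of the locus along which the fiber degenerates most severely; since Proposition \ref{prop} forces $X_{s_l}$ to have a single component, I expect $N_l=1$ to follow more or less immediately from the definition. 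The remaining task is the value of $\beta_l$.

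For $\beta_l$, the key is the relation $K_{F_l}\sim_{\mathbb Q}0$ together with the Calabi--Yau (finite torus quotient) nature of the generic fiber. I would compute $\beta_l$ from the canonical bundle formula for the fibration $f$: writing $K_X\sim_{\mathbb Q}f^*(K_{X_{can}}+B+M)$ where $B$ is the discriminant (boundary) divisor encoding the multiplicities of the singular fibers and $M$ the moduli part, the coefficient of $B$ at $s_l$ records exactly the multiplicity $m_l$ with which $F_l$ appears as a multiple fiber. Because $F_l$ is the unique component with trivial canonical class, the local structure near $X_{s_l}$ is that of a multiple fiber $X_{s_l}=m_l\,F_l$, and the characteristic index $\beta_l$ should come out to be $\frac{1}{m_l}$. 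The main obstacle I anticipate is the bookkeeping at this step: one must verify that the analytic quantity $\beta_l$ appearing in the curvature estimate coincides with the algebraic multiplicity coefficient $\frac{1}{m_l}$ in the canonical bundle formula, and that the canonical-singularity/trivial-canonical-class conclusions of Proposition \ref{prop} rule out any fractional contribution beyond a single multiple-fiber coefficient. Once this identification is established, combining it with $N_l=1$ yields $(\beta_l,N_l)=(\frac{1}{m_l},1)$ as claimed.
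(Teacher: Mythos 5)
Your proposal has a genuine gap at its very first step, and it is the step the whole plan rests on. There is no available ``projective reduction'' in case ($\star$): the paper stresses that Theorem \ref{mainthm_1} does \emph{not} assume projectivity precisely because the generic fiber is only a finite quotient of a torus (not necessarily of an abelian variety), so $X$ need not be projective, and one cannot replace it by a projective model in a controlled way. Any bimeromorphic modification touching the singular fibers introduces exceptional divisors, hence rational curves, in the new fibers, and by \cite[Proposition 1.4]{ToZyg} this forces type \textrm{II}b; it also destroys semi-ampleness (even nefness) of the canonical bundle, so the flow on the modified manifold no longer fits the framework at all. Moreover, the invariance you invoke, \cite[Theorem 1.4]{Zys17}, only says the singularity type is independent of the \emph{initial metric} on a fixed complex manifold $X$; it says nothing about invariance under change of the complex structure or of the bimeromorphic model. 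So the hypothesis ``type \textrm{III} on a projective $X'$ with the same fiber structure'' cannot be manufactured, and Proposition \ref{prop} cannot be brought to bear.

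Even granting projectivity, the second half of your argument underestimates what remains. The number $N_l$ is \emph{not} the number of irreducible components of $X_{s_l}$: it is computed on a log resolution $\pi:\hat X\to X$, where $\hat X_{s_l}=\sum a_jE_j$ typically has many more components than $X_{s_l}$ (the exceptional divisors of $\pi$), and irreducibility of $X_{s_l}$ does not by itself control how many $E_j$ achieve the minimum $\frac{k_j+1}{a_j}=\beta_l$ or whether they intersect. Likewise, extracting $\beta_l=\frac{1}{m_l}$ from $K_{F_l}\sim_{\mathbb Q}0$ plus canonical singularities requires an actual discrepancy/adjunction computation on the resolution, not just the canonical bundle formula quoted as a black box. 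For contrast, the paper's proof is entirely different and avoids all of this: using the type \textrm{III} bound it gets local bounded curvature near a singular fiber, invokes Gromov--Hausdorff convergence $(X,\omega(t))\to(X_{can},d_{can})$ (Tian--Z.L.~Zhang), applies Naber--Tian theory to conclude $(X_{can},d_{can})$ is a $C^1$ Riemannian orbifold near $s_l$ with cyclic group $\mathbb Z_{m_l}$, and then compares the resulting local metric model $\frac{ds\otimes d\bar s}{|s|^{2(1-1/m_l)}}$ with the asymptotics \eqref{est1.1} of $\omega_{GKE}$, in which $(\beta_l,N_l)$ is encoded; this forces $(\beta_l,N_l)=(\frac{1}{m_l},1)$ with no projectivity and no algebro-geometric classification of the fibers.
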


Note that in Theorem \ref{mainthm_1} we do \emph{not} need to assume $X$ is projective. 
Our proof 
mainly makes use of a metric geometric approach, which in fact proves a slightly stronger result, see Theorem \ref{mainthm} below.

Proofs will be provided in Section \ref{pf}.

\section{Proofs}\label{pf}
To prove Theorem \ref{mainthm_1} let's first recall some definitions, following \cite[Definition 2.3.1]{CY} (also see \cite[Definition 6.2.1]{HN}). Assume $X$ is an $n$-dimensional ($n\ge2$) compact K\"ahler manifold with semi-ample canonical line bundle and Kodaira dimension one. Let $f:X\to X_{can}$ and $V=\{s_1,...,s_L\}$ as before. For any given $s_l\in V$, by applying a Hironaka's resolution of singularities, one obtains a birational morphism
$$\pi:\hat{X}\to X$$
such that $\hat X$ is smooth, $\pi:\hat X\setminus \pi^{-1}(X_{s_l})\to X\setminus X_{s_l}$ is biholomorphic and
$$\hat f:=f\circ\pi:\hat X\to X_{can}$$
is a holomorphic surjective map with connected fibers and the singular fiber
\begin{equation}
\hat X_{s_l}=\sum a_jE_j\nonumber,
\end{equation}
where $a_i\in\mathbb{N}_{\ge1}$, $E_i$'s are smooth irreducible divisors in $\hat X$ and have simple normal crossings. We also have the following ramification formula
$$K_{\hat X}=\pi^*K_X+\sum k_jE_j,$$
where $k_j\in\mathbb{N}_{\ge0}$ since $X$ is smooth. Then the \emph{log-canonical threshold} of $X_{s_l}$ along $X$ is defined to be $\beta_l=\min\{\frac{k_{j}+1}{a_j}\}$, the \emph{log-canonical multiplicity} of $X_{s_l}$ along $X$ is defined to be the maximal integer $N_l$ such that there are $N_l$ distinct $E_j$'s with $\frac{k_j+1}{a_j}=\beta_l$ and non-empty intersection, and the \emph{characteristic index} of $X_{s_l}$ along $X$ is defined to be the pair $(\beta_l,N_l)$. The characteristic index $(\beta_l,N_l)$ measures the singularities of the singular fiber $X_l$ along $X$. Note that the above definitions do not depend on the choice of resolution $\pi$, see \cite[Lemma 2.1, Definition 2.1]{Ccy} and \cite[Section 2.3]{CY}.

\par Then we will prove the following, which implies our main result Theorem \ref{mainthm_1} immediately.

\begin{thm}\label{mainthm}
Assume $X$ as in case ($\star$) and Kodaira dimension $kod(X)$ is one. Let $V=\{s_0,\ldots,s_L\}\subset X_{can}$ be the set of critical values of $f$ in \eqref{semiample}. If there exists a singular fiber $X_{s_{l_0}}$ such that  the characteristic index $(\beta_{l_0},N_{l_0})$ is not of the form $(\frac{1}{m},1)$ for some positive integer $m$, then for every neighborhood $U$ of $X_{s_{l_0}}$ and every solution $\omega(t)$ to the K\"ahler-Ricci flow on $X$ we have 
\begin{equation}\label{blowup}
\liminf_{t\to\infty}\left(\sup_Usec_{\omega(t)}\right)=\infty\nonumber.
\end{equation}
In particular, every solution is of type \textrm{II}b.
\end{thm}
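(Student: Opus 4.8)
The plan is to prove the contrapositive in disguise: assuming a fiber whose characteristic index is \emph{not} of the form $(\frac1m,1)$, I want to show the curvature must blow up near that fiber along \emph{every} flow. The natural strategy here is metric-geometric, exploiting that the K\"ahler-Ricci flow \eqref{nkrf} at large time is driven by the limiting twisted K\"ahler-Einstein equation on the base $X_{can}$. Since $kod(X)=1$, the rescaled metrics $\omega(t)$ converge (away from $f^{-1}(V)$, by the local estimates of \cite{FZ,HeTo,ToZyg}) to the pullback of a metric $\omega_{can}$ on $X_{can}\setminus V$ solving a complex Monge-Amp\`ere equation of the form $Ric(\omega_{can}) = -\omega_{can} + \omega_{WP} + [\mathrm{current}]$, whose conical/cusp-type singularity structure at each critical value $s_l$ is governed precisely by the characteristic index $(\beta_l,N_l)$. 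First I would make this correspondence explicit: near $s_l$ the limiting base metric behaves like a model metric determined by $(\beta_l,N_l)$, and the condition $(\beta_l,N_l)=(\frac1m,1)$ is exactly the condition that this model is (after a local branched cover $z\mapsto z^m$) smooth, i.e.\ the total space looks like a smooth torus-bundle degeneration with no genuine curvature concentration.

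The heart of the argument is then a local curvature computation: I would show that whenever $(\beta_l,N_l)\neq(\frac1m,1)$, the limiting (or rescaled-limiting) geometry develops unbounded sectional curvature near the singular fiber. Concretely, I would take a sequence $t_i\to\infty$ and points $x_i\to f^{-1}(s_{l_0})$, and analyze the behavior of $\sup_U sec_{\omega(t)}$ by comparing $\omega(t)$ with the pullback model metric associated to the characteristic index. There are two failure modes to rule out against a bounded-curvature (type \textrm{III}) scenario: either $\beta_{l_0}\neq \frac1m$ for any $m$ (the cone angle at $s_{l_0}$ in the base is not of the smooth-cover type, forcing a conical singularity that produces curvature blow-up), or $N_{l_0}\geq 2$ (several top-threshold components meet, so the fiber degeneration is genuinely non-isotrivial/non-smoothable in the relevant sense, again forcing concentration). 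In each case I would exhibit a sequence of tangent two-planes on which $sec_{\omega(t)}$ diverges, using the known fiberwise collapsing rate (the fibers collapse like $e^{-t}$ while the base stays bounded) together with the mismatch between the collapsing rate and the model index.

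The main obstacle, I expect, is controlling the flow metric $\omega(t)$ uniformly \emph{close to} the singular fiber, since all the clean convergence results hold only on compact subsets of $X\setminus f^{-1}(V)$; one must transfer the singularity information of the limiting base equation back into a genuine lower curvature-blow-up estimate for the actual flow metrics, not merely for the limit. I would handle this by a contradiction/blow-up argument: if $\liminf_{t\to\infty}(\sup_U sec_{\omega(t)})<\infty$ were finite, then a subsequence of $(U,\omega(t))$ would have uniformly bounded curvature, and one could extract (via Cheeger-Gromov-type compactness, using the collapsing structure) a limiting smooth geometry near $f^{-1}(s_{l_0})$; I would then derive a contradiction by showing this forces the characteristic index to equal $(\frac1m,1)$, contradicting the hypothesis on $X_{s_{l_0}}$. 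The delicate point is that the collapsing is with unbounded diameter ratio between fiber and base directions, so the compactness must be applied in the collapsed sense (local covers / frame bundles), and the identification of the limit's local model with the characteristic-index model is where the computation genuinely bites; Theorem \ref{mainthm_1} then follows immediately by contraposition, since finiteness of the $\limsup$ (type \textrm{III}) forces $(\beta_l,N_l)=(\frac1{m_l},1)$ for every $l$.
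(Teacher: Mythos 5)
Your proposal contains two distinct plans, and it is important to separate them. The ``heart of the argument'' you describe first --- directly exhibiting two-planes on which $sec_{\omega(t)}$ diverges by comparing $\omega(t)$ with a model metric attached to $(\beta_{l_0},N_{l_0})$ --- is not viable with the available estimates: every convergence result for the flow ($\omega(t)\to f^*\omega_{GKE}$ in $C^0_{loc}$, etc.) holds only on compact subsets of $X\setminus f^{-1}(V)$, so there is no comparison between $\omega(t)$ and any model \emph{at} the singular fiber to differentiate. Indeed, the paper's Remark (iv) explicitly poses such an effective blow-up estimate as an open question. The workable part of your proposal is the fallback contradiction argument in your last paragraph, and that is essentially the paper's route; but as written it leaves the two pivotal steps as acknowledged black boxes, and this is where the genuine gap lies. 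First, you do not need (and should not try) to extract a limit near $f^{-1}(s_{l_0})$ by Cheeger--Gromov compactness from the bounded-curvature subsequence alone: the limit space is identified \emph{in advance}, with no curvature hypothesis, by Tian--Z.L.~Zhang \cite[Theorem 1.5]{TZzl} --- in case ($\star$) with $kod(X)=1$ one has $(X,\omega(t))\to(X_{can},d_{can})$ in Gromov--Hausdorff topology, where $d_{can}$ is the metric completion of $(X_{can}\setminus V,\omega_{GKE})$, and by \cite[Theorem 1]{Zys} the asymptotics of $\omega_{GKE}$ near $s_l$ are exactly $|s|^{-2(1-\beta_l)}(-\log|s|)^{N_l-1}\,\sqrt{-1}ds\wedge d\bar s$ up to bounded factors, i.e.\ inequality \eqref{est1.1}. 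What the subsequential curvature bound on $U$ then buys is \emph{regularity of this already-identified limit}: after checking that a definite-size ball $B_{\omega(t_i)}(x_0,\rho)$ around a point $x_0\in X_{s_0}$ stays inside $U$, the flow metrics are $\{A\}$-regular at $x_0$ in the sense of Naber--Tian, and their theory \cite[Theorem 1.1]{NT} forces the limit to be a $C^1$ Riemannian orbifold near $s_0$; since $X_{can}$ has real dimension two, the orbifold point is a cyclic quotient $\mathbb Z_{m_0}$ \cite{Sc}, whose quotient metric in the holomorphic coordinate $s$ is quasi-isometric to $|s|^{-2(1-\frac{1}{m_0})}ds\otimes d\bar s$. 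Comparing this with \eqref{est1.1} pins down $(\beta_0,N_0)=(\frac{1}{m_0},1)$, the desired contradiction. Without these inputs (or equivalents), your sentence ``the identification of the limit's local model with the characteristic-index model is where the computation genuinely bites'' is precisely the unproven crux, not a delicate point one can defer.

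Two further corrections. Minor but necessary: finiteness of $\liminf_{t\to\infty}\sup_U sec_{\omega(t)}$ gives only an \emph{upper} sectional curvature bound along a subsequence; the two-sided bound needed for any collapsed regularity theory comes from combining this with Hamilton's uniform lower bound on the scalar curvature along the flow (a sum of sectional curvatures bounded below, each bounded above, is bounded below termwise). Also, your framing via ``the fibers collapse like $e^{-t}$ while the base stays bounded'' and frame-bundle compactness is the right intuition for why orbifold structure appears, but the nilpotent/torus structure of the collapse is never used directly in the proof; the case ($\star$) hypothesis that the generic fiber is a finite quotient of a torus enters only through the Tian--Zhang Gromov--Hausdorff convergence theorem.
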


To prove Theorem \ref{mainthm} we will make use of a metric geometric approach, in which a key point is that, thanks to \cite[Theorem 1]{Zys}, the informations about the characteristic indexes of singular fibers will be saved in Gromov-Hausdorff limits of the K\"ahler-Ricci flow solution, via Song-Tian's generalized K\"ahler-Einstein current.

\begin{proof}[Proof of Theorem \ref{mainthm}]
 Firstly, we just assume $X$ is a compact K\"ahler manifold with semi-ample canonical line bundle and Kodaira dimension one. Let $\omega(t)_{t\in[0,\infty)}$ be a solution to the K\"ahler-Ricci flow \eqref{nkrf} on $X$. Thanks to Song and Tian's fundamental works \cite{ST07,ST12} there exists a generalized K\"ahler-Einstein current $\omega_{GKE}$ on $X_{can}$ which is a closed positive $(1,1)$-current on $X_{can}$ and a smooth K\"ahler metric on $X_{can}\setminus V$ such that, as $t\to\infty$, $\omega(t)$ converges to $f^*\omega_{GKE}$ as currents on $X$; moreover, for any $K\subset\subset X\setminus f^{-1}(V)$ there holds that $\omega(t)\to f^*\omega_{GKE}$ in $C^0(K,\omega_0)$-topology \cite{FZ,HeTo,TWY}. 
\par On the other hand, the numbers $\beta_l$ and $N_l$ arise naturally in the metric behaviors of the limiting singular metric $\omega_{GKE}$ near its singular locus. Precisely, by \cite[Theorem 1]{Zys} (also see \cite[Lemma 3.4]{ST07} and \cite[Section 3.3]{He} for the special case that $X$ is a minimal elliptic surface), there exists a constant $C\ge1$ such that, for any $s_l\in V$, we can choose a local holomorphic chart $(\Delta,s)$ in $X_{can}$ near $s_l$ (in which we identify $\Delta$ with $\{s\in\mathbb{C}||s|\le1\}$ and $s_l$ with $0\in\mathbb{C}$) with the property that
\begin{equation}\label{est1.1}
C^{-1}|s|^{-2(1-\beta_l)}(-\log|s|)^{N_l-1}\le\frac{\omega_{GKE}}{\sqrt{-1}ds\wedge d\bar s}(s)\le C|s|^{-2(1-\beta_l)}(-\log|s|)^{N_l-1}
\end{equation}
holds on $\Delta\setminus\{0\}$. Here, $\beta_l$ and $N_l$ are the log-canonical threshold and the log-canonical multiplicity of the singular fiber $X_{s_l}$, respectively. Consequently, $(X_\infty,d_\infty)$, the metric completion of $(X_{can}\setminus V,\omega_{GKE})$, is a compact length metric space homeomorphic to $X_{can}$. 
\par Denote $(X_{can},d_{can}):=(X_\infty,d_\infty)$. 
\par From now on, we further assume the generic fiber of $f$ is a finite quotient of a torus. Thanks to a recent work of Tian and Z.L. Zhang \cite[Theorem 1.5, Corollary 1.6]{TZzl} (also see our previous work \cite[Theorem 2]{Zys} for a weaker version), for $X$ in case ($\star$) of Kodaira dimension one, $(X,\omega(t))\to(X_{can},d_{can})$ in Gromov-Hausdorff topology, as $t\to\infty$. 
\par Now assume Theorem \ref{mainthm} fails. We may assume without loss of generality that $s_{l_0}=s_0$. Then we have a solution $\omega(t)$ to the K\"ahler-Ricci flow \eqref{nkrf} on $X$, a small open neighborhood $U$ of $X_{s_0}$, a time sequence $t_i\to\infty$ and a constant $A'\ge1$ such that
\begin{equation}\label{A}
\sup_Usec_{\omega(t_i)}\le A'.
\end{equation}
We may assume $U\cap X_{s_l}=\emptyset$ for every $s_l\neq s_0$.

Since by a well-known result of Hamilton the scalar curvature of $\omega(t)$ is uniformly bounded from below, it follows from \eqref{A} that there exists a constant $A''\ge1$ with
\begin{equation}\label{A'}
\inf_Usec_{\omega(t_i)}\ge-A''
\end{equation}
for every $i$.

On the other hand, let's try to understand the limiting metric space $(X_{can},d_{can})$ from a metric geometric viewpoint. In fact, by combining \eqref{A} and \eqref{A'} we know $(X,\omega(t_i)$ has uniformly bounded sectional curvature on $U$; also, the diameter of $(X,\omega(t_i))$ is uniformly bounded from above. We fix an arbitrary point $x_0\in X_{s_0}$, then $(X,\omega(t_i),x_0)\to(X_{can},d_{can},s_0)$ in pointed Gromov-Hausdorff topology. We will need the following
\begin{lem}\label{lem}
There exist a sufficiently small constant $\rho>0$ and a sufficiently large constant $T>0$, such that for any $t_i\ge T$ there holds 
$$B_{\omega(t_i)}(x_0,\rho)\subset U.$$
\end{lem}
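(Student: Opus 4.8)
The plan is to argue by contradiction using the pointed Gromov–Hausdorff convergence $(X,\omega(t_i),x_0)\to(X_{can},d_{can},s_0)$ established above, together with the fact that $U$ is an open neighborhood of the whole singular fiber $X_{s_0}$. The key observation is that under the limiting map, the entire fiber $X_{s_0}$ collapses to the single point $s_0\in X_{can}$; intuitively this means that for large $t_i$, a small intrinsic metric ball $B_{\omega(t_i)}(x_0,\rho)$ stays close to $X_{s_0}$ and hence inside $U$, since points of $X$ far from $X_{s_0}$ map to points of $X_{can}$ at definite distance from $s_0$.

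First I would set up the contradiction: suppose no such $\rho$ and $T$ exist. Then for every $j$ there is a radius $\rho_j\to 0$ and a subsequence of times (still denoted $t_i$, with $t_i\to\infty$) and points $y_{i}\in B_{\omega(t_i)}(x_0,\rho_j)\setminus U$. A cleaner formulation is: there is a fixed sequence $t_i\to\infty$, radii $\rho_i\to 0$, and points $y_i\in B_{\omega(t_i)}(x_0,\rho_i)$ with $y_i\notin U$. Next I would transport this to the limit. The crucial geometric input is that the complement $X\setminus U$ is compact and its image under $f$ avoids a neighborhood of $s_0$ in $X_{can}$: since $U$ is an open neighborhood of the fiber $f^{-1}(s_0)=X_{s_0}$, and since $U\cap X_{s_l}=\emptyset$ for $s_l\neq s_0$, one checks that $f(X\setminus U)$ is a compact subset of $X_{can}$ not containing $s_0$, so there is a definite $\omega_{GKE}$-distance $\delta>0$ from $s_0$ to $f(X\setminus U)$. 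I would make this precise by using that the convergence $\omega(t)\to f^*\omega_{GKE}$ is $C^0$ on compact subsets of $X\setminus f^{-1}(V)$, so that for points bounded away from the singular fibers, intrinsic $\omega(t_i)$-distance is controlled from below by the pulled-back $d_{can}$-distance of their $f$-images.

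The main step is then to derive the contradiction by comparing the two distance estimates. On the one hand, $d_{\omega(t_i)}(x_0,y_i)<\rho_i\to 0$. On the other hand, because $y_i\notin U$, its image $f(y_i)$ lies in $f(X\setminus U)$, so $d_{can}(s_0,f(y_i))\ge\delta>0$; using the Gromov–Hausdorff/current convergence $\omega(t_i)\to f^*\omega_{GKE}$ I would argue that $d_{\omega(t_i)}(x_0,y_i)$ is bounded below by (essentially) $d_{can}(s_0,f(y_i))$ up to an error tending to zero, hence eventually $d_{\omega(t_i)}(x_0,y_i)\ge\delta/2$, contradicting $\rho_i\to 0$.

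The hard part will be making rigorous the lower bound on the intrinsic distance $d_{\omega(t_i)}(x_0,y_i)$ in terms of $d_{can}(s_0,f(y_i))$. The subtlety is that a shortest $\omega(t_i)$-geodesic from $x_0$ to $y_i$ may pass through the collapsing region $f^{-1}(V)$, where the $C^0$ convergence of $\omega(t_i)$ to $f^*\omega_{GKE}$ fails and curvature may blow up, so one cannot directly push the geodesic forward and estimate its length by $d_{can}$. I would handle this by choosing the radius of $U$ and the intermediate annular region so that any path from $x_0$ leaving $U$ must cross a compact collar $f^{-1}(\{s: \delta/4\le d_{can}(s_0,s)\le \delta/2\})\subset X\setminus f^{-1}(V)$ on which $\omega(t_i)\to f^*\omega_{GKE}$ uniformly; the $f^*\omega_{GKE}$-length of the image of any crossing segment is then at least $\delta/4-o(1)$, which for $i$ large gives the required positive lower bound on $d_{\omega(t_i)}(x_0,y_i)$ independent of whether the remainder of the geodesic enters the singular region. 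This localizes the distance estimate to a region of uniform convergence and bounded geometry, circumventing the collapsing fibers.
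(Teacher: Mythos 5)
Your reduction is sound up to a point: $f(X\setminus U)$ is indeed compact and avoids $s_0$ (any preimage of $s_0$ lies in $X_{s_0}\subset U$), so there is $\delta>0$ with $d_{can}(s_0,f(X\setminus U))\ge\delta$, and any path from $x_0$ to a point outside $U$ must contain a segment $\sigma$ crossing the collar $K=f^{-1}(\{\delta/4\le d_{can}(\cdot,s_0)\le\delta/2\})$, which can be arranged to lie in $X\setminus f^{-1}(V)$. The genuine gap is exactly at the step you flag as the hard part: you cannot pass from $L_{f^*\omega_{GKE}}(\sigma)\ge\delta/4$ to a lower bound on $L_{\omega(t_i)}(\sigma)$ using only the $C^0$ convergence $\omega(t_i)\to f^*\omega_{GKE}$ on $K$. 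That convergence gives $\bigl||v|^2_{\omega(t_i)}-|v|^2_{f^*\omega_{GKE}}\bigr|\le\epsilon_i|v|^2_{\omega_0}$ with $\epsilon_i\to0$, so the length comparison it yields is $L_{\omega(t_i)}(\sigma)\ge L_{f^*\omega_{GKE}}(\sigma)-\sqrt{\epsilon_i}\,L_{\omega_0}(\sigma)$; since the limit $f^*\omega_{GKE}$ is degenerate along the fibers, a crossing segment of an almost-minimizing $\omega(t_i)$-geodesic may have unbounded $\omega_0$-length (it can wind in the collapsing fiber directions), and the error term is not $o(1)$. This is not a removable technicality: on a product elliptic fibration with fiber coordinate $z$ and base coordinate $w$, the K\"ahler forms
\begin{equation}
\omega_i=\sqrt{-1}\left(dw\wedge d\bar w+a_i\,dz\wedge d\bar w+a_i\,dw\wedge d\bar z+\epsilon_i^2\,dz\wedge d\bar z\right),\qquad a_i=\epsilon_i\sqrt{1-\epsilon_i^2},\nonumber
\end{equation}
converge in $C^0$ to the pullback $\sqrt{-1}\,dw\wedge d\bar w$, yet the sheared paths with velocity $\partial_w-a_i^{-1}\partial_z$ cross any fixed base annulus with $\omega_i$-length $O(\epsilon_i)\to0$. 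So $C^0$ convergence to a pullback metric is compatible with the collar-crossing distance collapsing to zero, and your contradiction never materializes. (Note also that abstract pointed Gromov--Hausdorff convergence $(X,\omega(t_i),x_0)\to(X_{can},d_{can},s_0)$ by itself does not relate $d_{\omega(t_i)}(x_0,y_i)$ to $d_{can}(s_0,f(y_i))$ either, since the approximating maps need not have anything to do with $f$.)

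What is needed is a statement tying distances to the map $f$ itself, and that is precisely how the paper argues: by \cite[Theorem 1.5]{TZzl} (see also \cite[Theorem 2]{Zys}) the Gromov--Hausdorff convergence is \emph{realized by} $f$, i.e.\ for large $i$ the map $f:(X,\omega(t_i))\to(X_{can},d_{can})$ is a $\frac{\delta}{4}$-Gromov--Hausdorff approximation; then for $x\in B_{\omega(t_i)}(x_0,\delta/4)$ one gets directly $d_{can}(f(x),s_0)\le d_{\omega(t_i)}(x,x_0)+\frac{\delta}{4}\le\frac{\delta}{2}$, hence $x\in f^{-1}(B_{d_{can}}(s_0,\delta))\subset U$ --- a three-line direct proof with no contradiction argument. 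Alternatively, your collar argument can be repaired by replacing the $C^0$ convergence with the parabolic Schwarz lemma of Song--Tian \cite{ST07,ST12}, which gives a uniform constant $C$ with $\omega(t)\ge C^{-1}f^*\chi$ on all of $X$ for a fixed K\"ahler metric $\chi$ on $X_{can}$; this nondegenerate lower bound rules out the shearing shortcuts, bounds every crossing segment's $\omega(t_i)$-length below by $C^{-1/2}$ times the $\chi$-width of the annulus, and in fact proves the lemma directly for all $t$ without invoking Gromov--Hausdorff convergence at all.
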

\begin{proof}[Proof of Lemma \ref{lem}]
This is essentially contained in \cite[Section 5, proof of Theorem 1.5]{TZzl} (also see \cite[Theorem 2, Section 4]{Zys}). In fact, we may firstly fix a small positive number $\delta$ such that $f^{-1}
(B_{d_{can}}(s_0,\delta))\subset U$, where $B_{d_{can}}(s_0,\delta)$ is the ball in $(X_{can},d_{can})$. We then set $\rho:=\frac{\delta}{4}$, By Gromov-Hausdorff convergence of the K\"ahler-Ricci flow one can fix a $T>0$ such that for any $t_i\ge T$, $f:(X,\omega(t_i))\to(X_{can},d_{can})$ is a $\frac{\delta}{4}$-Gromov-Hausdorff approximation. Now, for any $x\in B_{\omega(t_i)}(x_0,\rho)$, i.e. $d_{\omega(t_i)}(x,x_0)<\frac{\delta}{4}$, we have 
$$d_{can}(f(x),s_0)\le d_{\omega(t_i)}(x,x_0)+\frac{\delta}{4}\le \frac{\delta}{2},$$
and so $f(x)\in B_{d_{can}}(s_0,\delta)$, i.e. $x\in f^{-1}
(B_{d_{can}}(s_0,\delta))\subset U$. This lemma is proved.
\end{proof}
(We may also mention an observation which is not necessary in this note: for sufficiently $t_i$, $X_{s_0}\subset B_{\omega(t_i)}(x_0,\rho)$ since the diameter of fiber $X_{s_0}$ tends to zero as $t_i\to\infty$, see \cite[Section 5, proof of Theorem 1.5]{TZzl}.)
\par By Lemma \ref{lem}, we can fix a sufficiently large constant $A$ such that $(X,\omega(t_i),x_0)$ are $\{A\}$-regular at $x_0$ in the sense of Naber and Tian \cite[Definition 1.1]{NT}. Then, we can apply theory of Naber and Tian \cite[Theorem 1.1, Remark 1.2]{NT} to see that $(X_{can},d_{can})$ is a $C^{1}$ Riemannian orbifold in a neighborhood of $s_0$ (note that $X_{can}$ is of real $2$-dimension and hence there is no non-orbifold point in a neighborhood of $s_0$).  It is known that real 2-dimensional orbifold singularities are completely classified, see e.g. \cite[Sections 1 and 2]{Sc}. In our case, for the given $s_0$ in the singular locus $V$, it is an \emph{isolated} singular point; then by the forementioned classification results we know the associated orbifold group must be the finite cyclic group generated by a rotation, namely, $\mathbb Z_{m_0}$ for some integer $m_0\ge2$ depending on $s_0$ (see e.g. \cite[Sections 1 and 2]{Sc}). Consequently, locally near $s_0$, $(X_{can},d_{can})$ is isometric to the quotient of $(\Delta,g)$ by $\mathbb Z_{m_0}$, where $\Delta=\{u\in \mathbb C||u|<1\}$ and $g$ is some Riemannian metric on $\Delta$ which is $\mathbb Z_{m_0}$-invariant. 

\par  Let's first proceed by assuming an additional condition that the Riemannian metric $g$ is compatible with the complex structure of $(\Delta,u)$. Then we may write $g=h(u)du\otimes d\bar u$ for $u\in\Delta$. We want to rewrite $g$ with respect to the holomorphic coordinate $s$ in $\Delta$, which is the holomorphic coordinate used in \eqref{est1.1}. To this end, we first set a holomorphic coordinate $v=u^{m_0}$ in $\Delta$ and rewrite $g$ with respect to $v$. Then we easily conclude (see e.g. \cite[Section 6D]{NS}) that the metric $g$ in local holomorphic chart $(\Delta,v)$ reads $h(v^{\frac{1}{m_0}})m_0^{-2}\frac{dv\otimes d\bar v}{|v|^{2(1-\frac{1}{m_0})}}$ (note that $h(v^{\frac{1}{m_0}})$ is well-defined). After possibly shrinking $\Delta$, we may choose a biholomorphism $v=k(s):(\Delta,s)\to(\Delta,v)$; moreover, by theory on functions of one complex variable, there exists a holomorphic function $\tilde k=\tilde k(s)$ such that $|\tilde k(s)|$ is a positive function on $\Delta$ and $v=k(s)=\tilde k(s)s$. Therefore, the metric $g$ in local holomorphic chart $(\Delta,s)$ reads 
$$\frac{h(k(s)^{\frac{1}{m_0}})|\frac{\partial k}{\partial s}|^2}{m_0^2|\tilde k(s)|^{2(1-\frac{1}{m_0})}}\frac{ds\otimes d\bar s}{|s|^{2(1-\frac{1}{m_0})}},$$
and there exists a constant $\tilde C\ge1$ such that
\begin{equation}\label{est1.2}
\tilde C^{-1}\frac{ds\otimes d\bar s}{|s|^{2(1-\frac{1}{m_0})}}\le g\le\tilde C\frac{ds\otimes d\bar s}{|s|^{2(1-\frac{1}{m_0})}}
\end{equation}
on $\Delta\setminus\{0\}$. Here we have used that all the involved functions $h,|\frac{\partial k}{\partial s}|$ and $|\tilde k|$ are continuous, positive and bounded on $\Delta$.
\par Next we shall show that the inequality \eqref{est1.2} holds generally, even when $g$ is not necessarily compatible with the complex structure. In fact, we can always fix on $\Delta$ a $\mathbb Z_{m_0}$-invariant Riemannian metric $\tilde g$ which is quasi-isometric to $g$ (i.e. there is a constant $C\ge1$ such that $C^{-1}\tilde g\le g\le C\tilde g$ holds on $\Delta$) and compatible with the complex structure (for example, choose $\tilde g$ to be the standard Euclidean metric). Then the above arguments prove the inequality \eqref{est1.2} for $\tilde g$, and so for $g$ since $g$ is quasi-isometric to $\tilde g$. In conclusion, $g$ satisfies \eqref{est1.2} generally.

\par Finally, recall that $d_{can}$ on $X_{can}$ is induced by $\omega_{GKE}$ which satisfies \eqref{est1.1} in local holomorphic coordinate $(\Delta,s)$ near $s_0$. By comparing \eqref{est1.1} with \eqref{est1.2} there must hold $\beta_0=\frac{1}{m_0}$ and $N_0=1$. 

\par Theorem \ref{mainthm} is proved.
\end{proof}

\begin{rem}

(i) Obviously, Theorem \ref{mainthm_1} is an immediate consequence of Theorem \ref{mainthm}. Alternatively, we can prove Theorem \ref{mainthm_1} directly. Indeed, if we assume a type \textrm{III} solution $\omega(t)$ on $X$, then the curvature of $\omega(t)$ is uniformly bounded on $X$ and hence we can apply the same arguments as above, in which the role of \cite[Theorem 1.5]{TZzl} can be played by a weaker result \cite[Theorem 2]{Zys} since a type \textrm{III} solution automatically has a uniform lower bound for Ricci curvature on $X$.

(ii) We mention that while the proof of Gromov-Hausdorff convergence of the K\"ahler-Ricci flow uses only the upper bound of $\omega_{GKE}$ given in \eqref{est1.1} (see \cite[Theorem 1.5]{TZzl} or \cite[Theorem 2]{Zys}), the above proof of Theorem \ref{mainthm} really needs the \emph{asymptotics} of $\omega_{GKE}$ given in \eqref{est1.1}.

(iii)  From its proof, Theorems \ref{mainthm} and \ref{mainthm_1} also hold when $X$ is of dimension two, i.e. a minimal elliptic surface. 
Let $X$ be a minimal elliptic surface with elliptic fibration $f:X\to X_{can}$. Let $V=\{s_0,\ldots,s_L\}$ be the critical values set of $f$. According to Kodaira's table of singular fibers \cite{BHPV}, if $\beta_l=\frac{1}{m_l}$ for some positive integer $m_l$ and $N_l=1$, then the singular fiber $X_{s_l}$ must be one of Kodaira's types $mI_0, I_0^*,\textrm{II}^*,\textrm{III}^*$ and $\textrm{IV}^*$. Therefore, if there exists a singular fiber which is not of Kodaira's type $mI_0, I_0^*,\textrm{II}^*,\textrm{III}^*$ nor $\textrm{IV}^*$, then the K\"ahler-Ricci flow on $X$ must be of type \textrm{IIb}. This result partially recovers \cite[Theorem 1.6]{ToZyg}. 

(iv) The proof of Theorem \ref{mainthm} is essentially a contradiction argument. A natural question is can we have an effective argument for it? Precisely, given the condition in Theorem \ref{mainthm}, can we effectively estimate the blowup speed of curvature in terms of characteristic indexes?

(v) In the study of long time solutions to the K\"ahler-Ricci flow, two of the most important aspects are the convergence and curvature behavior at time-infinity. We know from previous works (e.g. \cite{FZ,G,TZzl,TWY,ToZyg,Zys}) that the curvature behavior at time-infinity is useful in obtaining/improving the convergence at time-infinity. Our above arguments conversely show that the convergence at time-infinity is also useful in understanding the curvature behavior at time-infinity. It should be very interesting to find more interplays between these two aspects of the K\"ahler-Ricci flow. 

\end{rem}

Finally, we give a

\begin{proof}[Proof of Proposition \ref{prop}]
As we recalled in Section 1, if there exists a type \textrm{III} solution to the K\"ahler-Ricci flow on $X$, then by \cite[Proposition 1.4]{ToZyg} every singular fiber $X_{s_l}$ must contain no rational curve, which in particular implies $X_{s_l}$ must contain no uniruled irreducible component. In this case, since $X$ is projective, according to Takayama's classification result, $X_{s_l}$ falls into the case (1.2) in \cite[Theorem 1.1]{Ta}, namely, $X_{s_l}=d_lF_l$, where $F_l$ is the only one irreducible component of $X_{s_l}$ ($d_l$ is the multiplicity of $F_l$), and $F_l$ is a normal projective variety of only canonical singularities, satisfies $K_{F_l}\sim_{\mathbb Q}0$ (here ``$\sim_{\mathbb Q}$" means $\mathbb Q$-linear equivalence) and contains no rational curve. Lastly, we remark that, when $X$ is projective, the smooth fiber of $f$ is a flat projective K\"ahler manifold \cite{ToZyg} and hence an Abelian variety up to a finite unramified covering.
\end{proof}

\section*{Acknowledgements}
The author thanks Professors Yin Jiang and Xiaochun Rong for useful conversations on convergence theory of Riemannian manifolds, Professors Frederick Fong, Shijin Zhang, Zhenlei Zhang and Zhou Zhang for their interest and comments on this note and the referee for helpful suggestions and comments.

\end{document}